\crefname{theorem}{Theorem}{Theorems}
\crefname{thm}{Theorem}{Theorems}
\crefname{lemma}{Lemma}{Lemmas}
\crefname{lem}{Lemma}{Lemmas}
\crefname{remark}{Remark}{Remarks}
\crefname{prop}{Proposition}{Propositions}
\crefname{defn}{Definition}{Definitions}
\crefname{corollary}{Corollary}{Corollaries}
\crefname{conjecture}{Conjecture}{Conjectures}
\crefname{question}{Question}{Questions}
\crefname{chapter}{Chapter}{Chapters}
\crefname{section}{Section}{Sections}
\crefname{figure}{Figure}{Figures}
\theoremstyle{plain}
\newtheorem{thm}{Theorem}[section]
\newtheorem{lemma}[thm]{Lemma}
\newtheorem{corollary}[thm]{Corollary}
\newtheorem{prop}[thm]{Proposition}
\theoremstyle{definition}
\theoremstyle{remark}
\newtheorem{remark}[thm]{Remark}
\numberwithin{equation}{section}
\newcommand{\Z}{\mathbb Z}
\newcommand{\eps}{\varepsilon}
\title{The Hammersley-Welsh bound for self-avoiding walk revisited}
\author{Tom Hutchcroft}
\begin{document}

\maketitle

\begin{abstract}
The Hammersley-Welsh bound (\emph{Quart.\ J.\ Math.,} 1962) states that the number $c_n$ of length $n$ self-avoiding walks on $\Z^d$ satisfies 
\begin{align*}
c_n &\leq 
 \exp \left[ O(n^{1/2}) \right] \mu_c^n,
\end{align*}
where $\mu_c=\mu_c(d)$ is the connective constant of $\Z^d$. While  stronger estimates have subsequently been proven for $d\geq 3$, for $d=2$ this has remained the best rigorous, unconditional bound available. In this note, we give a new, simplified proof of this bound, which does not rely on the combinatorial analysis of unfolding.  We also prove a small, non-quantitative improvement to the bound, namely
\begin{align*}
c_n &\leq \exp\left[ o(n^{1/2})\right] \mu_c^n.
\end{align*}
The improved bound is obtained as a corollary to the sub-ballisticity theorem of Duminil-Copin and Hammond (\emph{Commun.\ Math.\ Phys.,} 2013). We also show that any quantitative form of that theorem would yield a corresponding quantitative improvement to the Hammersley-Welsh bound.  
\end{abstract}

\section{Introduction}

Fix $d\geq 2$ and consider the hypercubic lattice $\Z^d$. A \textbf{self-avoiding walk} (SAW) is a simple path in $\Z^d$, that is, a path that does not visit any vertex more than once.  Self-avoiding walk was introduced as a model of a linear polymer in a good solvent by Flory and Orr \cite{flory1953principles,orr1947statistical}. The rigorous study of self-avoiding walk leads to many questions that are easy to state but difficult to solve, several of which are still open.  See \cite{MR2986656,MR3025395} for detailed introductions to the theory. 

Write $\Omega$ for the set of self-avoiding walks, and let $c_n$ be the number of length-$n$ self-avoiding walks starting at the origin.
Hammersley and Morton \cite{MR0064475} observed that the sequence $(c_n)_{n\geq0}$ is \textbf{submultiplicative}, meaning that $c_{n+m}\leq c_n c_m$ for every $n,m\geq 0$. It follows by Fekete's Lemma \cite{MR1544613} that there exists a constant $\mu_c=\mu_c(d)$, known as the \textbf{connective constant} of $\Z^d$, such that
\[ \mu_c^n \leq c_n \leq \mu_c^{n+o(n)}\]
for every $n\geq 0$.
Submultiplicativity arguments alone do not give any control of the subexponential correction to the growth of $c_n$, and it is a major open problem to determine the true asymptotics of $c_n$ when $d=2,3,4$.

It is believed that in fact the number of self-avoiding walks satisfies 
\begin{align*}
c_n \approx \begin{cases}
 n^{\gamma_2-1} \mu_c^n & d=2\\
 n^{\gamma_3-1} \mu_c^n & d=3\\
 (\log n)^{1/4} \mu_c^n & d=4\\
 \mu_c^n & d\geq 5.
\end{cases} && n\to \infty 
\end{align*}
for some constants $\gamma_2$ and $\gamma_3$.  For $d\geq 5$ this conjecture was verified in the seminal work of Hara and Slade \cite{MR1171762,MR1174248}. Rapid progress is being made on the four-dimensional case of the conjecture, including most notably a proof of the analogous conjecture for four-dimensional \emph{weakly} self-avoiding walk by Bauerschmidt, Brydges, and Slade \cite{MR3339164}. For $d=2,3$ the conjecture is wide open.

 The gap between what is conjectured and what is known for $d=2$ is very large. 
 Nienhuis \cite{MR675241} used non-rigorous Coulomb gas methods to compute that $\gamma_2=43/32$. This conjectured value of $\gamma_2$ is strongly supported by numerical evidence \cite{MR1342245,MR2065628}, non-rigorous conformal field theory arguments \cite{MR947005,MR1103831}, and by the theory of SLE \cite{MR2112127} (see also \cite{1608.00956}). In spite of all this, the best rigorous, unconditional estimate on $c_n$ for $d=2$  was until now the following theorem of Hammersley and Welsh \cite{MR0139535}. For $d\geq 3$, a similar stretched exponential bound with a better exponent was proven by Kesten~\cite{MR0166845}, see also  \cite[Section 3.3]{MR2986656}.


\begin{thm}[Hammersley-Welsh]
Let $d\geq2$. Then
\[
c_n \leq \exp \left[ \sqrt{\frac{2\pi^2 n}{3}} + o(n^{1/2}) \right] \mu_c^n = \exp \left[ O(n^{1/2}) \right] \mu_c^n 
\]
as $n\to\infty$.
\end{thm}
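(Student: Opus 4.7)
My plan is to follow the classical Hammersley-Welsh strategy, decomposing self-avoiding walks into \emph{bridges} via \emph{half-space walks} and extracting a subexponential factor from the asymptotics of integer partitions. Write $x^{(1)}$ for the first coordinate of $x \in \Z^d$. Define a bridge of length $n$ to be a SAW $\omega_0, \omega_1, \ldots, \omega_n$ from the origin satisfying $\omega_0^{(1)} < \omega_i^{(1)} \leq \omega_n^{(1)}$ for every $1 \leq i \leq n$, and let $b_n$ count them. Similarly let $h_n$ count length-$n$ half-space walks, those SAWs satisfying only the weaker condition $\omega_0^{(1)} < \omega_i^{(1)}$ for $1 \leq i \leq n$.

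The argument will rest on three structural facts. First, concatenation of two bridges yields a bridge, so $(b_n)$ is \emph{super}multiplicative, and Fekete's lemma produces $\mu_B := \sup_n b_n^{1/n}$ with $b_n \leq \mu_B^n$ for every $n$ and $\mu_B \leq \mu_c$. Second, the \emph{unfolding} bound $h_n \leq P_D(n)\, b_n$, where $P_D(n)$ is the number of partitions of $n$ into distinct positive parts: iteratively reflect the maximal sub-bridges of a half-space walk across vertical hyperplanes placed at successive records of the first coordinate, producing a single bridge, and record the list of reflection depths as a strictly decreasing sequence summing to at most $n$. Third, any SAW splits at its leftmost vertex of minimum first coordinate into two pieces which (with a minor boundary adjustment in the definition of half-space walk) are both half-space walks, yielding the convolution bound $c_n \leq \sum_{k=0}^{n} h_k\, h_{n-k}$.

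Combining these with the Hardy-Ramanujan asymptotic $P_D(n) = \exp\bigl[\pi \sqrt{n/3} + o(\sqrt n)\bigr]$ gives
\begin{align*}
c_n \leq (n+1) \max_{0 \leq k \leq n} \exp\!\left[\pi \sqrt{k/3} + \pi \sqrt{(n-k)/3} + o(\sqrt n)\right] \mu_B^n.
\end{align*}
The exponent is maximized at $k = n/2$ with value $2\pi\sqrt{n/6} = \pi\sqrt{2n/3} = \sqrt{2\pi^2 n / 3}$, yielding $c_n \leq \exp\bigl[\sqrt{2\pi^2 n/3} + o(\sqrt n)\bigr] \mu_B^n$. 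Combined with the trivial lower bound $c_n \geq \mu_c^n$, this also forces $\mu_B = \mu_c$, completing the proof.

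The decisive obstacle is the unfolding step: defining the unfolding map precisely, verifying that its image lies in the bridge set, and bounding the fiber size by the number of partitions into distinct parts requires delicate combinatorial bookkeeping, and is exactly the ``combinatorial analysis of unfolding'' that the present paper proposes to bypass. The remaining ingredients --- bridge supermultiplicativity, the two-piece split at the leftmost coordinate minimum, and the Hardy-Ramanujan partition asymptotic --- are comparatively routine, and I would expect any alternative proof to still produce the same $\pi\sqrt{2n/3}$ constant via essentially the same exponent optimization at $k=n/2$.
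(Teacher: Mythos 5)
Your outline is correct and, carried out in full, it does prove the theorem with the sharp constant $\sqrt{2\pi^2 n/3}$ --- but it is the classical Hammersley--Welsh/Madras--Slade argument, and it is genuinely different from the route taken in this paper. The paper does not reprove the sharp-constant version at all: that statement is quoted from the 1962 original, and what is actually proved here is the weaker Proposition \ref{prop:weakHW}, $c_n\le \exp\bigl[\sqrt{8n}+O(\sqrt{1/n})\bigr]\mu_c^{n+1}$, obtained by a much shorter mechanism: the Madras--Slade generating-function inequality $\chi(z)\le z^{-1}\exp[2B(z)-2]$ (Proposition \ref{prop:M&S}), the bridge estimate $a(z_c;n)\le 1$ (Lemma \ref{lem:xizc}) giving $B((1-\eps)z_c)\le \eps^{-1}$, and the elementary step $z_c^n c_n\le(1-\eps)^{-n}\chi((1-\eps)z_c)$ optimized at $\eps=\sqrt{2/n}$. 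The trade-off is exactly the one you identify: your route requires the unfolding map, its injectivity and fiber-counting bookkeeping, and the Hardy--Ramanujan asymptotic $P_D(n)=\exp[\pi\sqrt{n/3}+o(\sqrt{n})]$, in exchange for the optimal constant $\pi\sqrt{2/3}\approx 2.56$; the paper's route avoids unfolding and partitions entirely but only reaches $\sqrt{8}\approx 2.83$, and --- crucially for the rest of the paper --- it is the generating-function formulation that admits the improvement to $o(n^{1/2})$ via Duminil-Copin--Hammond. Two small points should you write your argument up completely: the reflection depths form a strictly decreasing sequence summing to \emph{at most} $n$, so the fiber bound is $\sum_{m\le n}P_D(m)\le (n+1)\max_{m\le n}P_D(m)$ rather than $P_D(n)$ itself (harmless, being absorbed into the $e^{o(\sqrt{n})}$ factor); and the split at the minimal first coordinate needs the asymmetric convention (e.g.\ last vertex attaining the minimum, with one piece satisfying a strict and the other a weak inequality) so that both pieces are genuinely half-space walks --- you flag both issues correctly, and they are where the combinatorial care that this paper seeks to bypass is actually spent.
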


In this note, we prove the following slight improvement to the Hammersley-Welsh bound, which we show to be a corollary  to the work of Duminil-Copin and Hammond \cite{MR3117515} on sub-ballisticity of the self-avoiding walk.

\begin{thm}
\label{thm:main}
Let $d\geq 2$. Then 
$
c_n \leq  \exp \left[ o(n^{1/2}) \right] \mu_c^n$ 
as $n\to\infty$.
\end{thm}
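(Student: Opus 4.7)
The plan is to combine a parameterised refinement of the Hammersley--Welsh argument with the Duminil-Copin--Hammond sub-ballisticity theorem \cite{MR3117515}. Write $c_n(H)$ for the number of length-$n$ self-avoiding walks started at the origin whose extent in the first coordinate direction, $\max_i x_1(\omega_i)-\min_i x_1(\omega_i)$, is at most $H$. The first step is to establish the parametric bound
\[
c_n(H)\leq e^{O(\sqrt{H})}\,\mu_c^n \qquad (1\leq H\leq n).
\]
Running the usual unfolding argument, every half-space walk factors uniquely as a concatenation of bridges with strictly decreasing spans, and the spans of the constituent bridges sum to the walk's total $x_1$-extent. Restricting to walks with extent at most $H$ therefore restricts the sequence of spans to a partition of some integer $\leq H$ into distinct parts, of which there are $e^{O(\sqrt{H})}$ (with the implicit constant $\pi/\sqrt{3}$ in the limit). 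Combined with the bound $b_n\leq\mu_c^n$ for bridges, which follows from the super-multiplicativity $b_{n+m}\geq b_n b_m$ obtained by concatenation, this should yield the parametric estimate, provided one controls the sum over bridge-length compositions carefully.

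The second step is to use sub-ballisticity to select walks of small extent. The Duminil-Copin--Hammond theorem asserts that under the uniform measure $\bP_n$ on length-$n$ self-avoiding walks started at the origin,
\[
\bP_n\!\left[\max_{0\leq i\leq n}\lVert\omega_i\rVert_\infty \geq \eps n\right]\longrightarrow 0 \qquad\text{as } n\to\infty
\]
for every $\eps>0$. A standard diagonalisation then yields a sequence $\delta_n\to 0$ with $c_n\leq (1+o(1))\, c_n(\delta_n n)$ (passing from $\lVert\cdot\rVert_\infty$-displacement to $x_1$-extent costs only a factor of $2d$ by symmetry of coordinates and a union bound). Substituting this into the Step~1 estimate gives
\[
c_n\leq (1+o(1))\,e^{O(\sqrt{\delta_n n})}\,\mu_c^n = e^{o(\sqrt n)}\,\mu_c^n,
\]
since $\sqrt{\delta_n n}=\sqrt{\delta_n}\sqrt n=o(\sqrt n)$, which is Theorem~\ref{thm:main}.

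The main obstacle is carrying out Step~1 with the correct dependence on $H$. A naive application of the Hammersley--Welsh argument would pay $\binom{n-1}{k-1}$ for distributing length $n$ among $k$ bridges, which is too lossy as soon as $k$ is allowed to be $\Theta(\sqrt H)$; the correct route is to work at the level of generating functions, say via $\prod_{m\geq 1}(1+B_m(z))$ where $B_m$ enumerates bridges of span $m$, and extract coefficients in the spirit of \cite[Section~3.1]{MR2986656}. The fact that the improvement is only qualitative is then a direct reflection of the non-quantitative nature of Duminil-Copin--Hammond: any quantitative rate $\delta_n=n^{-\eta}$ in Step~2 would translate into a quantitative saving of $\sqrt{\delta_n}=n^{-\eta/2}$ in the exponent of the Hammersley--Welsh bound.
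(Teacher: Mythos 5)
Your strategy---prove a parametric bound $c_n(H)\leq e^{O(\sqrt H)}\mu_c^n$ for walks of $x_1$-extent at most $H$, then invoke sub-ballisticity to restrict attention to $H=\delta_n n$ with $\delta_n\to0$---is genuinely different from the paper's, but Step~1 rests on a false combinatorial claim. In the unfolding decomposition of a half-space walk into bridges of strictly decreasing spans $A_1>A_2>\cdots>A_k\geq 1$, the spans do \emph{not} sum to the walk's $x_1$-extent: the extent of a half-space walk equals the single largest span $A_1$, whereas $\sum_j A_j$ is the span of the \emph{unfolded} bridge and is bounded only by the walk's length $n$. (Consider a zigzag whose $x_1$-coordinate runs $0\to A\to 1\to A-1\to 2\to\cdots$: its extent is $A$ but $\sum_j A_j\approx A^2/2\approx n$.) Restricting the extent to $H$ therefore only forces the distinct parts to lie in $\{1,\dots,H\}$, so the number of admissible span sequences is of order $\min\bigl(2^{H},\,e^{O(\sqrt n)}\bigr)$, not $e^{O(\sqrt H)}$. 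Since Duminil-Copin--Hammond supplies only $H=\delta_n n\gg\sqrt n$ at an unknown rate, this gives no improvement whatsoever over the classical bound, and the asserted estimate $c_n(H)\leq e^{O(\sqrt H)}\mu_c^n$ is not established (nor do I see how to obtain it from unfolding). A secondary issue: the sub-ballisticity theorem you quote controls the endpoint displacement of a uniform walk, not $\max_i\lVert\omega_i\rVert_\infty$; passing to the maximum needs an additional argument.

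The paper avoids this trap by never conditioning on small extent. It uses the \emph{bridge} form of Duminil-Copin--Hammond---the number of length-$m$ bridges reaching height at least $n$ is at most $b_m e^{-\phi(n/m)m}$, i.e.\ bridges ending on the hyperplane $L_n$ typically have length much greater than $n$---to improve the trivial bound $a((1-\eps)z_c;n)\leq(1-\eps)^n$ on the span-$n$ bridge generating function to roughly $(1-\eps)^{\psi(\eps)n}$ with $\psi(\eps)\to\infty$. Summing over $n$ gives $B((1-\eps)z_c)=o(\eps^{-1})$, and the Madras--Slade inequality $\chi(z)\leq z^{-1}\exp[2B(z)-2]$ together with the usual optimization over $\eps\asymp n^{-1/2}$ then yields $c_n\leq e^{o(\sqrt n)}\mu_c^n$ without any unfolding or partition counting. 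If you want to salvage your approach, the quantity you would need to control is not the extent of the walk but the sum of the spans in its bridge decomposition, and sub-ballisticity does not obviously bound that.
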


Along the way, we also present a simplified proof of the Hammersley-Welsh bound (with a suboptimal constant in the exponent) that does not rely on either the combinatorial analysis of `unfolding' or the analysis of integer partitions.  We believe that both have been used in all published proofs of Hammersley-Welsh to date. We also believe that this simplified method of proof will be useful for obtaining further improvements to the bound in the future.

Let us briefly discuss the theorem of Duminil-Copin and Hammond. We will not in fact use their main result, but rather an intermediate result of theirs concerning self-avoiding \emph{bridges} \cite[Corollary 2.4 and Theorem 2.5]{MR3117515}. 
A self-avoiding walk $\omega$ is a \textbf{self-avoiding bridge} (SAB) if the $d$th coordinate of $\omega$ is uniquely minimized by its starting point, and is maximized (not necessarily uniquely) by its endpoint. Let $b_n$ be the number of self-avoiding bridges of length $n$ starting at the origin. Probabilistically, \cref{thm:DCH} states that for every fixed $\eps>0$, a uniformly chosen length $n$ SAB is exponentially unlikely to reach height $\eps n$, where the rate of exponential decay depends on $\eps$ (in some unknown way) but is always positive. Note that a length $n$ SAB cannot reach height greater than $n$. We write $\omega:v \to A$ to mean that $\omega$ starts at the vertex $v$ and ends in the set $A$.

\begin{thm}[Duminil-Copin and Hammond]
\label{thm:DCH}
Let $d\geq 2$. There exists an increasing%
\footnote{In \cite{MR3117515} it is not stated that the function is increasing. However, if $\phi:(0,1]\to (0,\infty)$ satisfies \eqref{eq:phidef}, then $\tilde \phi(\eps) = \sup \{\phi(\delta) : 0<\delta \leq \eps\}$ is positive, increasing and also satisfies \eqref{eq:phidef}.}
 function $\phi=\phi_d:(0,1]\to(0,\infty)$ such that
\begin{align}
\label{eq:phidef}
\sum_{\omega \in \Omega}\mathbbm{1}\bigl[\omega: 0 \to \Z^{d-1} \times \{m,m+1,\ldots\} \text{\emph{ is a length $n$ SAB}}\bigr]   \leq   b_n \exp\left[-\phi\left(\frac{m}{n}\right)n \right] 
\end{align}
for all $n \geq m >0$.
\end{thm}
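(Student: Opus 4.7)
The plan is to adapt the bridge-concatenation methodology at the heart of the Hammersley--Welsh theory. Write $B_n(m)$ for the left-hand side of \eqref{eq:phidef}, i.e.\ the number of length-$n$ SABs from the origin that reach height $\geq m$. The key structural input is the \textbf{bridge concatenation inequality}: given a length-$n_1$ SAB of height $\geq m_1$ and a length-$n_2$ SAB of height $\geq m_2$, translating the second bridge so that its starting point coincides with the endpoint of the first produces a length-$(n_1+n_2)$ SAB of height $\geq m_1+m_2$ (self-avoidance is preserved because the translate lies strictly above the first bridge except at the gluing point), and this operation is injective. This yields the superadditive inequality
\[
B_{n_1}(m_1)\, B_{n_2}(m_2) \;\leq\; B_{n_1+n_2}(m_1+m_2).
\]

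Combined with the classical asymptotics $b_n = \mu_c^{n+o(n)}$, a two-parameter Fekete argument applied to $-\log B_n(m)$ ensures the existence, for each rational $\alpha \in (0,1]$, of the limit
\[
\phi_0(\alpha) \;:=\; -\lim_{n\to \infty} \frac{1}{n} \log \frac{B_n(\lfloor \alpha n\rfloor)}{b_n} \;\in\; [0,\infty],
\]
which is nondecreasing and convex in $\alpha$. The theorem then reduces to showing $\phi_0(\alpha) > 0$ for every $\alpha > 0$, after which the passage to a positive increasing function $\phi$ of continuous argument is routine (as in the footnote to the statement).

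The positivity is the main obstacle, and the substantive content of the Duminil--Copin--Hammond argument. One natural route is by contradiction: if $\phi_0(\alpha_0)=0$, then by convexity $\phi_0 \equiv 0$ on $[0,\alpha_0]$, so that on the exponential scale a full proportion of length-$n$ bridges reach height $\alpha_0 n$ for large $n$. A pattern-theorem-type argument in the spirit of Kesten then shows that such ``ballistic'' bridges must contain a linear density of locally deformable patterns whose modification yields exponentially many distinct bridges of the same length but strictly smaller height; combined with the preceding lower bound, this would overshoot the total bridge count $b_n$, a contradiction. The nonquantitative nature of Kesten's pattern theorem in this last step is precisely what prevents the conclusion from coming with any explicit form for $\phi$.
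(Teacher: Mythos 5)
The first thing to say is that the paper does not prove this statement: it is imported wholesale from Duminil-Copin and Hammond \cite{MR3117515} (their Corollary 2.4 and Theorem 2.5), and the only argument the paper supplies is the footnote's one-line monotonization $\tilde\phi(\eps)=\sup\{\phi(\delta):0<\delta\leq\eps\}$. So you are attempting to prove the external input itself. Your superadditive set-up is essentially sound: the concatenation inequality $B_{n_1}(m_1)B_{n_2}(m_2)\leq B_{n_1+n_2}(m_1+m_2)$ is correct, and with it the problem does reduce to showing that the rate function $\phi_0(\alpha)$ is strictly positive for $\alpha>0$. (Two wrinkles you should not gloss over: you want ceilings, not floors, for the monotonicity of $B_n(\cdot)$ to point the right way in the Fekete step; and normalizing by $b_n$ rather than $\mu_c^n$ costs a factor $e^{O(\sqrt n)}$, since one only knows $b_n=\mu_c^n e^{-O(\sqrt n)}$, which matters if you want \eqref{eq:phidef} for \emph{all} $n\geq m>0$ rather than asymptotically along fixed rays $m/n=\alpha$.)

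The genuine gap is the positivity step, which is the entire content of the theorem, and your proposed mechanism does not work. First, replacing a height-increasing pattern by a height-neutral pattern of the same length translates the remainder of the walk, and there is no reason the result is self-avoiding, let alone still a bridge; Kesten's pattern theorem is built around insertions at carefully constructed sites and does not hand you a length- and bridge-preserving surgery. Second, even granting the surgery, the counting does not close: each ballistic bridge produces at most $\binom{an}{k}$ images, while each image has up to $\binom{n}{k}$ preimages (you cannot tell which $k$ occurrences of the replacement pattern were flipped), so the net multiplicity is exponentially \emph{small} and nothing overshoots $b_n$. Kesten's argument is run in the opposite direction (walks with \emph{few} patterns are rare), and the failure of exactly this kind of local-surgery argument is why sub-ballisticity remained open for fifty years before \cite{MR3117515}. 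The actual proof there goes through the decomposition of bridges into irreducible bridges, the renewal identity for the two-variable generating function $\sum_{n,h}b_{n,h}z^n y^h$, and the bound that the irreducible-bridge generating function at $(z_c,1)$ is at most $1$; it is that argument, not a pattern theorem, whose non-quantitativeness the paper refers to. As written, your proposal assumes the hard part.
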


The proof of \cref{thm:DCH} is not quantitative and does not give any estimates on the function $\phi$. However, our derivation of \cref{thm:main} from \cref{thm:DCH} \emph{is} quantitative, so that any quantitative version of \cref{thm:DCH} would yield a quantitative improvement to Hammersley-Welsh. Indeed, suppose that $\phi:(0,1]\to[0,\infty)$ satisfies \eqref{eq:phidef}. Define $\Phi:(0,1]\to[0,\infty)$ by
\[
\Phi(\eps) = \inf_{\eps \leq \delta \leq 1} \delta^{-1}\phi(\delta)
\]
 and define $\psi:(0,1)\to[1,\infty)$ by
\[\psi(\eps)=\sup\left\{\lambda \geq 1 : \eps \leq 1- \exp\left[ - \frac{\Phi(\lambda^{-1})}{\lambda-1} \right]\right\}. \]
If $\phi$ is positive and increasing  then $\Phi$ is positive and increasing also, and it follows that $\psi(\eps) \to\infty$ as $\eps\to 0$. 
Define $\Psi:\{0,1,\ldots\}\to [0,\infty)$ by
\[
\Psi(n) = \inf_{0<\eps< 1}\left[2
\left[1-(1-\eps)^{\psi(\eps)}\right]^{-1} 
- (n+1) \log(1-\eps) \right].
\]
The following is a quantitative version of \cref{thm:main}.
\begin{thm} 
\label{thm:quant}
Let $d\geq 2$. Suppose that $\phi:(0,1]\to[0,\infty)$ is increasing and satisfies \eqref{eq:phidef}, and let $\Psi$ be as above. Then
\[ c_n \leq    \exp \left[\Psi(n) -2\right] \mu_c^{n+1} \]
for every $n\geq 0$.
\end{thm}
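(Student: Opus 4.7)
I would adapt the simplified Hammersley--Welsh argument developed earlier in the paper by replacing its crude input $b_n \le \mu_c^n$ with a sharper estimate extracted from the Duminil-Copin--Hammond bound \eqref{eq:phidef}. The proof has three steps.

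\textbf{Step 1 (reduction to a bridge generating function).} The simplified HW decomposition should yield, for any $\eps \in (0,1)$, an inequality of the form
\[
c_n (1-\eps)^{n+1} \mu_c^{-(n+1)} \;\le\; B(\eps)^2,
\]
where
\[
B(\eps) = \sum_{n \ge 0}\sum_{h \ge 1} b_n^{(h)} (1-\eps)^n \mu_c^{-n}
\]
and $b_n^{(h)}$ counts bridges of length $n$ and height $h$. The squaring reflects splitting a SAW at its lex-smallest vertex of minimum $d$-th coordinate into two half-space walks, each of which is in turn expressed through a bridge-type structure. The weight $(1-\eps)^{n+1}$ is what naturally tracks the $n+1$ vertices of an $n$-step SAW.

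\textbf{Step 2 (bounding $B(\eps)$ via DCH).} The trivial bound $b_n \le \mu_c^n$ only gives $B(\eps) = O(1/\eps)$, which merely reproduces the classical $\exp[O(\sqrt{n})]$ bound after Step 3. To improve, I would split the sum defining $B(\eps)$ according to whether the ratio $h/n$ exceeds $1/\psi(\eps)$. In the ``tall'' regime $h \ge n/\psi(\eps)$, \cref{thm:DCH} gives $b_n^{(\ge \delta n)} \le b_n \exp[-\phi(\delta) n]$; using $\phi(\delta)n \ge \delta^{-1}\phi(\delta) \cdot \delta n \ge \Phi(\delta) h$ and the geometric weight $(1-\eps)^n$, this part contributes a bounded amount. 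In the ``short'' regime $h < n/\psi(\eps)$, one has $n \ge \psi(\eps) h$, so summing $(1-\eps)^n$ first in $n$ and then in $h$ produces a geometric series in $(1-\eps)^{\psi(\eps)}$. The implicit condition $\eps \le 1 - \exp[-\Phi(\lambda^{-1})/(\lambda-1)]$ defining $\psi$ is engineered so that the thresholds match and the two parts combine to give
\[
B(\eps) \;\le\; \exp\!\left[\frac{1}{f(\eps)} - 1\right], \qquad f(\eps) := 1 - (1-\eps)^{\psi(\eps)}.
\]

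\textbf{Step 3 (optimization).} Combining Steps 1--2,
\[
c_n \;\le\; \exp\!\left[\frac{2}{f(\eps)} - 2 - (n+1)\log(1-\eps)\right] \mu_c^{n+1}
\]
for all $\eps \in (0,1)$, and taking the infimum over $\eps$ recognises exactly $\Psi(n) - 2$ in the exponent.

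\textbf{Main obstacle.} The delicate point is the bookkeeping in Step 2: the definition of $\psi(\eps)$ is precisely tuned so that the DCH decay at the threshold $1/\psi(\eps)$ and the geometric weight $(1-\eps)^{\psi(\eps)}$ balance, removing the troublesome $1/\eps$ factor from the naïve estimate. Verifying that both contributions combine \emph{exactly} into the clean form $\exp[1/f(\eps)-1]$, rather than with spurious constants, is the step where one has to be careful; once that is done, Steps 1 and 3 are essentially routine.
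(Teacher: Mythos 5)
Your overall strategy is the paper's: bound the bridge generating function at $(1-\eps)z_c$ using the Duminil-Copin--Hammond estimate with a threshold $\lambda=\psi(\eps)$, feed this into the unfolding-free Hammersley--Welsh machinery, and optimize over $\eps$; your Step 3 display is exactly the paper's final bound. However, both of your intermediate inequalities are misstated, and they compose to the right answer only because the two errors cancel. In Step 1, the correct reduction (Proposition \ref{prop:M&S}, due to Madras and Slade) is $\chi(z)\le z^{-1}\exp[2B(z)-2]$, not $\chi(z)\le z^{-1}B(z)^2$: the decomposition of a half-space walk into bridges of strictly decreasing spans bounds the half-space generating function by $\exp[B(z)-1]$, not by $B(z)$, so the inequality you assert is strictly stronger than the known one (since $e^{x-1}\ge x$) and is not delivered by the decomposition you describe. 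Correspondingly, the correct Step 2 conclusion is the polynomial bound $B((1-\eps)z_c)\le\bigl[1-(1-\eps)^{\psi(\eps)}\bigr]^{-1}$, not the exponential $\exp[1/f(\eps)-1]$.

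The mechanism you propose for Step 2 (split by the ratio of height to length and sum directly) also does not quite produce a clean geometric series: the ``tall'' regime contributes an extra additive term, and the ``short'' regime carries a polynomial prefactor of order $\lambda n$ in front of $(1-\eps)^n e^{-\Phi(\lambda^{-1})n}$. The missing idea---precisely the ``delicate point'' you flag---is to exploit the supermultiplicativity of $n\mapsto a(z;n)$: by Fekete's lemma $a(z;n)\le e^{-\xi(z)n}$ for every $n$, so the two-term bound $a((1-\eps)z_c;n)\le(1-\eps)^{\lambda n}+\lambda n(1-\eps)^n e^{-\Phi(\lambda^{-1})n}$ only needs to be read off at the level of the exponential rate. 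This gives $\xi((1-\eps)z_c)\ge-\psi(\eps)\log(1-\eps)$ and hence the prefactor-free estimate $a((1-\eps)z_c;n)\le(1-\eps)^{\psi(\eps)n}$, whose sum over $n$ is exactly $\bigl[1-(1-\eps)^{\psi(\eps)}\bigr]^{-1}$; plugging this into Proposition \ref{prop:M&S} and the trivial inequality \eqref{eq:trivial2} yields $\Psi(n)-2$ in the exponent. With these two repairs your argument coincides with the paper's proof.
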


To deduce \cref{thm:main} from \cref{thm:quant} it suffices to show that $\Psi(n)=o(n^{1/2})$ when $\phi$ is positive and increasing, which is straightforward. The classical Hammersley-Welsh bound (or rather the variant stated in Proposition \ref{prop:weakHW} below) can also be deduced from \cref{thm:quant} and the observation that $\phi\equiv 0$ trivially satisfies \eqref{eq:phidef}.  

Finally, we remark 
that any polynomial estimate on the function $\phi$ appearing in \cref{thm:DCH} would yield an improved exponent in the Hammerlsey-Welsh bound.  We do not treat values $\nu \leq 1$ as these cannot possibly occur; see Remark \ref{remark}.

\begin{corollary}
\label{cor:poly}
Let $d\geq 2$, and suppose that $\phi(\eps)=C \eps^{\nu}$ satisfies \eqref{eq:phidef} for some $\nu>1$ and $C>0$. Then 
\begin{equation*}
c_n \leq \exp\left[ O\bigl(n^{(\nu-1)/(2\nu-1)}\bigr) \right] \mu_c^n.
\end{equation*}
\end{corollary}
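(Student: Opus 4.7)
The plan is to read off $c_n$ from Theorem \ref{thm:quant} by plugging $\phi(\eps)=C\eps^{\nu}$ into the chain of definitions $\Phi\to\psi\to\Psi$ and optimizing the resulting one-variable infimum. None of the steps should involve anything beyond calculus asymptotics once $\nu>1$ is in hand; the whole argument is essentially a stationary-point calculation.

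First I would compute $\Phi$. Since $\nu>1$, the map $\delta\mapsto\delta^{-1}\phi(\delta)=C\delta^{\nu-1}$ is strictly increasing on $(0,1]$, so the infimum defining $\Phi(\eps)$ is attained at $\delta=\eps$ and
\[
\Phi(\eps)=C\eps^{\nu-1}.
\]
Next I would determine the small-$\eps$ behaviour of $\psi(\eps)$. For $\lambda\geq 1$ the condition $\eps\leq 1-\exp[-\Phi(\lambda^{-1})/(\lambda-1)]$ rearranges to $-\log(1-\eps)\leq C\lambda^{-(\nu-1)}/(\lambda-1)$; for $\lambda$ large this right-hand side is $\sim C\lambda^{-\nu}$ while the left-hand side is $\sim\eps$ for small $\eps$. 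Solving for the supremal admissible $\lambda$ yields the asymptotic
\[
\psi(\eps)\sim C^{1/\nu}\,\eps^{-1/\nu}\qquad\text{as }\eps\to 0,
\]
so in particular $\eps\,\psi(\eps)\sim C^{1/\nu}\eps^{(\nu-1)/\nu}\to 0$ (this uses $\nu>1$).

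Now I would evaluate $\Psi(n)$ by restricting the infimum to small $\eps$. Using $\eps\psi(\eps)\to 0$, I can approximate
\[
1-(1-\eps)^{\psi(\eps)}=1-\exp\!\bigl[\psi(\eps)\log(1-\eps)\bigr]\sim \eps\,\psi(\eps)\sim C^{1/\nu}\eps^{(\nu-1)/\nu},
\]
so that the first term in the definition of $\Psi$ is $\sim 2C^{-1/\nu}\eps^{-(\nu-1)/\nu}$, while the second term is $-(n+1)\log(1-\eps)\sim n\eps$. Hence for small $\eps$,
\[
\Psi(n)\leq \bigl(1+o(1)\bigr)\Bigl[2C^{-1/\nu}\eps^{-(\nu-1)/\nu}+n\eps\Bigr].
\]
Minimizing the right-hand side in $\eps$ gives the stationary point $\eps\asymp n^{-\nu/(2\nu-1)}$, at which both terms are of order $n^{(\nu-1)/(2\nu-1)}$, so $\Psi(n)=O(n^{(\nu-1)/(2\nu-1)})$. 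Substituting this bound into Theorem \ref{thm:quant} and absorbing the extra factor $\mu_c$ into the exponential yields the claim.

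The only genuine care needed is to replace the heuristic equivalences $\sim$ by two-sided inequalities valid uniformly for small enough $\eps$, so that the $o(1)$ factor above is genuinely bounded. This is routine since all the functions involved are monotone with explicit power-law controls; I do not expect a real obstacle, and the restriction $\nu>1$ is exactly what ensures both that $\Phi(\eps)$ vanishes as $\eps\to 0$ (so $\psi(\eps)\to\infty$) and that $\eps\psi(\eps)\to 0$ (so the linearization $1-(1-\eps)^{\psi(\eps)}\sim\eps\psi(\eps)$ is valid).
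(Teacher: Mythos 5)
Your proposal is correct and follows essentially the same route as the paper: compute $\Phi(\eps)=C\eps^{\nu-1}$, deduce the lower bound $\psi(\eps)\gtrsim \eps^{-1/\nu}$, substitute into $\Psi$, and balance the two terms at $\eps\asymp n^{-\nu/(2\nu-1)}$ (the paper parametrizes $\eps=n^{-\alpha}$ and optimizes over $\alpha=\nu/(2\nu-1)$, which is the same calculation). The only cosmetic difference is that the paper works throughout with one-sided inequalities ($\Phi(\eps)\geq C\eps^{\nu-1}$, $\psi(\eps)\geq C'\eps^{-1/\nu}$) rather than asymptotic equivalences, which is exactly the tightening you note is needed at the end.
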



\begin{remark}
The central step in the proof of \cref{thm:quant} is to show that the generating function $B(z)=\sum_{n=0}^\infty z^n b_n$ satisfies
\[
B((1-\eps)z_c) \leq \left[ 1- (1-\eps)^{\psi(\eps)}\right]^{-1}.
\]
In particular, we obtain that if $\phi(\eps)=C\eps^\nu$ satisfies \eqref{eq:phidef} for some $C>0$ and $\nu>1$, then 
\begin{equation}
B((1-\eps)z_c) =O\left( \eps^{(1-\nu)/\nu} \right) \quad \text{as }\eps\to 0. 
\end{equation}
This can be thought of as an inequality between the `sub-ballisticity exponent' and the bridge counting exponent. 
\end{remark}

\section{Proof}

\subsection{Proof of Hammersley-Welsh}

Fix $d\geq 2$.
In this section we give our new proof of the classical Hammersley-Welsh bound. Our starting point will be the following inequality between generating functions due to Madras and Slade \cite[eq.\ 3.1.13]{MR2986656}.
 We define
\[\chi(z) = \sum_{n\geq 0} z^n c_n \quad  \text{ and } \quad
B(z)=\sum_{n\geq0} z^n b_n
\]
to be the generating functions of self-avoiding walks and self-avoiding bridges respectively. We define $z_c=\mu_c^{-1}$, which is the radius of convergence of $\chi(z)$ and hence also of $B(z)$ by the following proposition.

\begin{prop}[Madras and Slade]
\label{prop:M&S}
$\chi(z) \leq z^{-1}\exp\left[ 2B(z)-2\right]$ for every $z\geq 0$.
\end{prop}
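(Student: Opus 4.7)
The plan is to chain two classical decompositions of self-avoiding walks. I introduce \emph{half-space walks}: SAWs $\omega$ starting at the origin whose $d$-th coordinate is strictly positive at every step $k\ge 1$; let $h_n$ denote their number (with $h_0 = 1$) and $H(z)=\sum_{n\ge 0} h_n z^n$ the corresponding generating function. In the first stage I split an arbitrary SAW at a carefully chosen vertex of minimum $d$-th coordinate (with a tie-breaking convention so that the split is well-defined). Reversing the prefix and translating both pieces to start at the origin produces an ordered pair of half-space walks whose combined length equals that of the original SAW. Careful bookkeeping for the shared split vertex yields
\[
z\,\chi(z) \le H(z)^2, \qquad \text{equivalently,} \qquad \chi(z) \le z^{-1} H(z)^2.
\]

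In the second stage I prove $H(z) \le \exp[B(z)-1]$. To each half-space walk I associate a finite ordered sequence of SABs by iterating the following step: cut the walk at the last time its $d$-th coordinate attains its overall maximum, so that the prefix is automatically a SAB; reverse the remaining tail and repeat. The structural heart of the argument is that \emph{self-avoidance forces the heights of the successive extracted SABs to be strictly decreasing}. Hence a half-space walk is uniquely encoded by a finite strictly decreasing sequence of positive heights $h_1 > h_2 > \cdots > h_k$ together with one SAB of each selected height. Writing $B_h(z)$ for the generating function of SABs of height exactly $h$, this gives
\[
H(z) \le \prod_{h\ge 1}\bigl(1+B_h(z)\bigr) \le \exp\!\left[\sum_{h\ge 1} B_h(z)\right] = \exp[B(z)-1],
\]
using $\log(1+x)\le x$ and the convention $b_0 = 1$ so that $B(z) = 1 + \sum_{h\ge 1} B_h(z)$. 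Combining the two stages yields $\chi(z) \le z^{-1} H(z)^2 \le z^{-1}\exp[2B(z)-2]$, as claimed.

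The main obstacle is establishing the strict decrease of bridge heights in the second stage. Without this monotonicity, the encoding of half-space walks would require summing over all height sequences (with repetitions allowed), and the bound on $H(z)$ would degenerate into a divergent geometric series in $B(z)$ instead of an exponential. The strict decrease is a geometric consequence of self-avoidance: two successive bridges of the same height in the decomposition would force the walk to traverse the same vertical slab twice, creating a self-intersection. Making this precise requires care about the cutting convention (``last attainment of the maximum'' is used so that the extracted prefix is automatically a bridge with its maximum at the endpoint) and then a direct combinatorial verification that the alternating cut-and-reverse procedure produces bridges of strictly decreasing heights — this verification is the content of the Madras--Slade proof.
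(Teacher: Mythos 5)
First, a point of comparison: the paper does not prove Proposition \ref{prop:M&S} at all --- it is imported verbatim from Madras and Slade (their eq.\ 3.1.13), precisely so that the unfolding-style analysis you are sketching can be used as a black box. So your proposal is an outline of that external proof rather than an alternative to anything argued in the paper. The architecture you describe (split at the last minimizer of the $d$-th coordinate to get $\chi(z)\le z^{-1}H(z)^2$, then decompose half-space walks into bridges of strictly decreasing spans to get $H(z)\le\exp[B(z)-1]$) is indeed the standard one, and both displayed inequalities are correct as stated.

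There is, however, a genuine gap at exactly the point you call ``the structural heart of the argument.'' The strict decrease of the successive bridge spans is \emph{not} a consequence of self-avoidance, and the justification you offer --- that two successive bridges of equal height ``would force the walk to traverse the same vertical slab twice, creating a self-intersection'' --- is false: a walk can traverse the same slab many times without intersecting itself (ascend in one column, move across, descend in another). The correct reason is purely one-dimensional, a statement about the sequence of $d$-th coordinates alone: if $A_1$ is the maximal height of the half-space walk and $j_1$ is the \emph{last} time it is attained, then every vertex of the tail $\omega[j_1,n]$ has height in $\{1,\dots,A_1\}$ by the half-space property, so the downward span $A_2$ of the next piece satisfies $A_2\le A_1-1$; iterating, each successive tail is confined strictly inside the height range of the previous piece because of the last-attainment convention, and this is what forces $A_1>A_2>\cdots$. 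Since you explicitly defer the verification to Madras--Slade and the heuristic you substitute for it does not work, this key step is not established by your proposal. Two smaller imprecisions: (i) the iteration must alternate between ``last maximum'' and ``last minimum'' of the remaining tail (or reflect the tail before re-cutting); after reversal the maximum of the tail sits at its endpoint, so ``cut at the last maximum'' as literally stated is degenerate. (ii) In the first stage the reversed prefix is only a \emph{weak} half-space walk (its minimum can recur at interior vertices); the factor $z^{-1}$ in $\chi(z)\le z^{-1}H(z)^2$ arises from prepending an extra $-e_d$ step to repair this, not from bookkeeping for the shared split vertex.
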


This inequality relies on similar ideas as the proof of the Hammersley-Welsh bound, but is easier to prove. It does not rely on the combinatorial analysis of the `unfolding' of walks. We will prove that the following version of Hammersley-Welsh with a suboptimal constant can be deduced directly from Proposition \ref{prop:M&S} by elementary methods.

\begin{prop}
\label{prop:weakHW}
$c_n \leq \exp\left[ \sqrt{8n} + O\left(\sqrt{1/n}\right) \right] \mu_c^{n+1}.$
\end{prop}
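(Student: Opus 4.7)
The plan is to apply Proposition \ref{prop:M&S} at $z=(1-\eps)z_c$, combine it with the trivial tail bound $c_n z^n\le\chi(z)$, and optimize over $\eps\in(0,1)$. This gives
\[
c_n \;\le\; (1-\eps)^{-(n+1)}\,\mu_c^{\,n+1}\,\exp\!\left[\,2B\bigl((1-\eps)z_c\bigr)-2\,\right],
\]
so the only missing ingredient is a good upper bound on $B((1-\eps)z_c)$.

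The input I would use is the classical fact that $b_n\le\mu_c^{\,n}$ for every $n\ge 0$. This is not explicitly stated in the excerpt but is elementary: the vertical concatenation of two self-avoiding bridges is again a self-avoiding bridge, so $b_{n+m}\ge b_n b_m$, i.e.\ $\log b_n$ is superadditive. Fekete's lemma (in its superadditive form) then gives $(\log b_n)/n\to\sup_{m\ge 1}(\log b_m)/m=\log\mu_b$, and in particular $b_n\le\mu_b^{\,n}$. Since $B(z)$ has radius of convergence $z_c=\mu_c^{-1}$ (as noted in the excerpt just before Proposition \ref{prop:M&S}), we have $\mu_b=\mu_c$, and hence $b_n\le\mu_c^{\,n}$. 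Summing a geometric series now gives
\[
B\bigl((1-\eps)z_c\bigr) \;=\; \sum_{n\ge 0} b_n\bigl((1-\eps)z_c\bigr)^n \;\le\; \sum_{n\ge 0}(1-\eps)^n \;=\; \frac{1}{\eps}.
\]

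Taking logarithms in the first display and substituting the geometric bound yields
\[
\log c_n - (n+1)\log\mu_c \;\le\; -(n+1)\log(1-\eps) + \frac{2}{\eps} - 2,
\]
and I would then set $\eps=\sqrt{2/(n+1)}$ to balance the two leading contributions. Using $-\log(1-\eps)=\eps+\eps^2/2+O(\eps^3)$, the leading pieces are $(n+1)\eps=\sqrt{2(n+1)}$ and $2/\eps=\sqrt{2(n+1)}$, summing to $2\sqrt{2(n+1)}=\sqrt{8n}+O(n^{-1/2})$; the next-order term $(n+1)\eps^2/2=1$ gives an $O(1)$ correction which combined with the $-2$ is negative (hence harmless), and higher-order terms contribute $O(n^{-1/2})$. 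This produces the advertised bound $c_n\le\exp[\sqrt{8n}+O(\sqrt{1/n})]\mu_c^{\,n+1}$.

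There is essentially no obstacle here: once Proposition \ref{prop:M&S} and the elementary inequality $b_n\le\mu_c^{\,n}$ are in hand, Hammersley--Welsh reduces to a single calculus optimization that matches the bridge and walk generating-function tails at $z=(1-\eps)z_c$. The only subtlety worth flagging is the short bridge-concatenation argument for $b_n\le\mu_c^{\,n}$, which replaces entirely the combinatorial unfolding step and the integer-partition asymptotics used in previous proofs.
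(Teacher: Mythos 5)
Your proof is correct and follows essentially the same route as the paper: Proposition \ref{prop:M&S}, a geometric-series bound $B((1-\eps)z_c)\le \eps^{-1}$, and the optimization $\eps \asymp n^{-1/2}$. The only (immaterial) difference is that you derive the geometric bound from $b_n\le\mu_c^n$ by summing over lengths, whereas the paper sums over heights using $a(z_c;n)\le 1$ from Lemma \ref{lem:xizc}; both facts are stated in the paper and follow from the same supermultiplicativity-plus-Fekete argument.
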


Let $L_n = \Z^{d-1} \times \{n\}$ for each $n\geq 0$. 
For each $z\geq 0$ and $n\geq 0$, define
\[
a(z;n) = \sum_{\omega \in \Omega} z^{|\omega|} \mathbbm{1}\bigl[ \omega: 0 \to  L_n \text{ is a SAB}\bigr].
\]
If $\omega_1:0\to L_n$ and $\omega_2:0\to L_m$ are bridges, then we can form a bridge $\omega:0\to L_{n+m}$ by applying a translation to $\omega_2$ so that it starts at the endpoint of $\omega_1$ and then concatenating the two paths. This implies that the sequence $a(z;n)$ is supermultiplicative for each $z\geq 0$, meaning that
\[a(z;n+m)\geq a(z;n)a(z;m)\]
for every $n,m\geq 0$. 
It follows by Fekete's lemma that for each $z\geq 0$ there exists $\xi(z)$ such that
\begin{equation}
\label{eq:xidef}
\xi(z) = \lim_{n\to\infty} - \frac{1}{n} \log a(z;n) \in [-\infty,\infty]
\end{equation}
and that
\begin{equation}
\label{eq:abound}
a(z;n) \leq e^{-\xi(z)n}\end{equation}
for every $n\geq 0$. 

\begin{lemma}
\label{lem:xizc}
$\xi(z_c) \geq 0$. 
\end{lemma}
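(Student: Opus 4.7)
The plan is to prove the equivalent statement that $a(z_c; n) \leq 1$ for every $n \geq 0$. Indeed, supermultiplicativity of $a(z; \cdot)$ combined with Fekete's lemma gives $-\xi(z) = \sup_n n^{-1}\log a(z; n)$, so $\xi(z_c) \geq 0$ is equivalent to $\log a(z_c; n) \leq 0$ for every $n$. I will first establish this uniform bound in the subcritical regime $z < z_c$ and then pass to the limit.

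For $z < z_c$ the argument goes as follows. Iterating supermultiplicativity yields $a(z; n)^N \leq a(z; Nn)$ for every positive integer $N$. Partitioning self-avoiding bridges according to the (unique) height of their endpoint gives the identity
\[
B(z) = \sum_{h \geq 0} a(z; h),
\]
so in particular $a(z; Nn) \leq B(z)$. Finally, the elementary bound $b_k \leq c_k$ combined with the fact that $\chi$ has radius of convergence $z_c$ shows that $B(z) \leq \chi(z) < \infty$ for every $z < z_c$. Chaining these inequalities gives $a(z; n) \leq B(z)^{1/N}$ for every $N \geq 1$; sending $N \to \infty$ yields $a(z; n) \leq 1$.

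To conclude, I will pass to the limit $z \uparrow z_c$. Since $a(z; n)$ is a power series in $z$ with non-negative coefficients, monotone convergence gives $a(z_c; n) = \lim_{z \uparrow z_c} a(z; n) \leq 1$, as required. I do not expect any real obstacle in executing this plan; the only slightly non-routine ingredient is the ``$N$th root'' trick in the previous paragraph, which upgrades the trivial bound $a(z; Nn) \leq B(z)$ to the sharp pointwise bound $a(z; n) \leq 1$ by exploiting supermultiplicativity.
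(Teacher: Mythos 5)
Your proposal is correct and takes essentially the same route as the paper: both deduce $a(z;n)\le 1$ for all $z<z_c$ from supermultiplicativity together with the finiteness of $B(z)=\sum_{h\ge 0}a(z;h)$ in the subcritical regime, and then pass to $z=z_c$ by left-continuity (monotone convergence) of the power series $a(\cdot\,;n)$. Your ``$N$th root'' trick is just a self-contained unpacking of the Fekete-lemma bound \eqref{eq:abound} that the paper invokes, so there is no substantive difference.
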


\begin{proof}
For each $n\geq 0$,  $a(z;n)$ is expressible as a power series in $z$ with non-negative coefficients, and is therefore left-continuous in $z$ for $z>0$.
If $z<z_c$ then $B(z)=\sum_{n\geq 0}a(z;n)<\infty$, so that $\xi(z)\geq 0$ by the identity \eqref{eq:xidef}, and hence that $a(z;n)\leq 1$ for every $n\geq 0$ by the inequality \eqref{eq:abound}. Since this bound holds for all $z<z_c,$ it also holds for $z=z_c$ by left continuity.
\end{proof}

A similar analysis shows that $(b_n)_{n\geq0}$ is supermultiplicative and that 
\begin{equation}
\label{eq:bridges}
b_n \leq \mu_c^n
\end{equation}
for every $n\geq 0$ \cite[eq.\ 1.2.17]{MR2986656}.




\begin{proof}[Proof of Proposition \ref{prop:weakHW}] 
We have the trivial inequality
\begin{equation}
\label{eq:trivial}a((1-\eps)z_c;n) \leq (1-\eps)^n a(z_c;n) \leq (1-\eps)^n.\end{equation}
It follows that $B((1-\eps)z_c) \leq \eps^{-1}$, and hence that
\[
\chi((1-\eps)z_c) \leq \frac{1}{z}\exp\left[2\eps^{-1} -2\right].
\]
To conclude, we apply the trivial inequality
\begin{equation}
\label{eq:trivial2}
z_c^n c_n \leq (1-\eps)^{-n}\chi((1-\eps)z_c) 
\end{equation}
with $\eps = (n/2)^{-1/2}$ to deduce that
\begin{align*}
z_c^{n+1} c_n &\leq 
\exp\left[\sqrt{2n} -2\right]\Bigl(1-\sqrt{2/n}\Bigr)^{-n-1} =  \exp\left[\sqrt{8n}+ O(\sqrt{1/n})\right]
\end{align*}
as $n\to\infty$, where the equality on the right-hand side follows by calculus (the $-2$ has not been forgotten).
\end{proof}

\subsection{Proof of the improvement}

The main idea behind \cref{thm:main,thm:quant} is that \cref{thm:DCH} allows us to improve upon the trivial inequality \eqref{eq:trivial}. This improvement is encapsulated in the following lemma.


\begin{lemma}$
\xi((1-\eps)z_c) \geq -\psi(\eps)\log(1-\eps)$ 
for every $\eps>0$. 
\end{lemma}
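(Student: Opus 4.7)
The plan is to sharpen the trivial bound \eqref{eq:trivial} on $a(z;n)$ using \cref{thm:DCH}. Since any SAB ending at height exactly $n$ also ends in the upper half-space $\Z^{d-1}\times\{n,n+1,\ldots\}$, I can decompose $a(z;n)$ by length and apply \eqref{eq:phidef} to get
\[
a(z;n) \leq \sum_{k\geq n} z^k b_k \exp\left[-\phi(n/k)\,k\right].
\]
Setting $z=(1-\eps)z_c$ and using the bridge bound $b_k z_c^k \leq 1$ from \eqref{eq:bridges}, the $k$th summand becomes $\exp[-n(\lambda L + \lambda\phi(1/\lambda))]$, where $L := -\log(1-\eps)$ and $\lambda := k/n \geq 1$.

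The heart of the argument is to show
\[
\lambda L + \lambda\phi(1/\lambda) \geq \psi(\eps)\,L \qquad \text{for every } \lambda \geq 1.
\]
For $\lambda\geq \psi(\eps)$ this is immediate since $\phi > 0$. For $\lambda\in[1,\psi(\eps))$, it suffices to show $\lambda\phi(1/\lambda)\geq (\psi(\eps)-1)L$; then $\lambda L + \lambda\phi(1/\lambda) \geq L + (\psi(\eps)-1)L = \psi(\eps)L$ using $\lambda\geq 1$. To establish this I would combine the two relevant definitions. First, by the definition of $\Phi$ as an infimum over $\delta\in[\mu,1]$, plugging $\delta=1/\lambda$ gives $\lambda\phi(1/\lambda)\geq \Phi(1/\lambda^*)$ for every $\lambda^*\geq \lambda$. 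Second, because $\Phi$ is increasing while $(\lambda-1)L$ is increasing in $\lambda$, the defining set $\{\lambda\geq 1 : \Phi(1/\lambda) \geq (\lambda-1)L\}$ of $\psi(\eps)$ is a down-set in $[1,\infty)$, so $\Phi(1/\lambda^*)\geq (\lambda^*-1)L$ for every $\lambda^*<\psi(\eps)$. Taking $\lambda^*\uparrow \psi(\eps)$ then yields $\lambda\phi(1/\lambda)\geq (\psi(\eps)-1)L$ as required.

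To conclude, I would combine the key inequality with the trivial $\lambda L + \lambda\phi(1/\lambda)\geq \lambda L$ to bound the summand by $\min\bigl((1-\eps)^{k+(\psi(\eps)-1)n},\,(1-\eps)^k\bigr)$; the sums over $k\in[n,\psi(\eps)n)$ and $k\geq\psi(\eps)n$ are each geometric series with ratio $1-\eps$ and produce
\[
a((1-\eps)z_c;n) \leq \frac{2}{\eps}\,(1-\eps)^{\psi(\eps)n}.
\]
Dividing $-\log$ by $n$ and letting $n\to\infty$ then yields $\xi((1-\eps)z_c)\geq -\psi(\eps)\log(1-\eps)$ in view of \eqref{eq:xidef}.

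The main obstacle will be the heart of the argument: one must correctly track the monotonicity of $\Phi$, recognize that the defining set of $\psi(\eps)$ is an interval so that the limit $\lambda^*\uparrow\psi(\eps)$ is legitimate, and verify that the resulting bound $\lambda\phi(1/\lambda)\geq(\psi(\eps)-1)L$ holds uniformly across the entire range $\lambda\in[1,\psi(\eps))$ (in particular, at the boundary $\lambda=1$, where the inequality is saturated by the straight-line SAB). Once that is pinned down, the summation in the last step is routine.
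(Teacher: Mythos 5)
Your proposal is correct and takes essentially the same route as the paper's proof: both sharpen the trivial bound \eqref{eq:trivial} by decomposing the bridges counted by $a((1-\eps)z_c;n)$ according to their length, applying \cref{thm:DCH} together with $b_k z_c^k\leq 1$, and then unwinding the definitions of $\Phi$ and $\psi$. The only (cosmetic) difference is that the paper keeps the splitting point $\lambda n$ as a free parameter and optimizes over $\lambda$ at the level of the rate $\xi$ via the intermediate inequality \eqref{eq:epslambda}, whereas you fold that optimization into a single pointwise bound on the exponent, taking $\lambda^*\uparrow\psi(\eps)$ there instead.
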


\begin{proof}
It suffices to prove that
\begin{equation}
\label{eq:epslambda}
\xi((1-\eps)z_c) \geq \min\bigl\{ -\lambda\log(1-\eps), -\log(1-\eps)+ \Phi(\lambda^{-1})\bigr\}\end{equation}
for every $\eps>0$ and $\lambda\geq 1$, since the result then follows by optimizing over $\lambda$.
Splitting the walks contributing to $a((1-\eps)z_c;n)$ according to whether they have length more than $\lambda n$ or not yields that
\begin{multline*}
a((1-\eps)z_c;n) \leq \sum_{m\geq \lambda n} \sum_{\omega \in \Omega} z_c^m(1-\eps)^m
\mathbbm{1}\bigl[\omega: 0 \to L_n \text{ a length $m$ SAB} \bigr]\\
+  \sum_{m=n}^{\lambda n} \sum_{\omega \in \Omega} z_c^m(1-\eps)^m\mathbbm{1}\bigl[\omega: 0 \to L_n \text{ a length $m$ SAB} \bigr]
\end{multline*}
and hence that
\begin{multline*}
a((1-\eps)z_c;n) \leq (1-\eps)^{\lambda n}\sum_{m\geq \lambda n} \sum_{\omega \in \Omega} z_c^m
\mathbbm{1}\bigl[\omega: 0 \to L_n \text{ a length $m$ SAB} \bigr]\\
+  (1-\eps)^n \sum_{m=n}^{\lambda n} \sum_{\omega \in \Omega} z_c^m \mathbbm{1}\bigl[\omega: 0 \to L_n \text{ a length $m$ SAB} \bigr].
\end{multline*}
This implies that
\begin{align*}
a((1-\eps)z_c;n)
&\leq (1-\eps)^{\lambda n} a(z;n) +  (1-\eps)^n\sum_{m = n}^{\lambda n} z_c^m b_m \exp\left[-\phi\left(\frac{n}{m}\right)m\right]\\
&\leq (1-\eps)^{\lambda n} + \lambda n (1-\eps)^n \exp\left[-\Phi(\lambda^{-1})n\right]
\end{align*}
for every $n\geq 0$, where \eqref{eq:abound}, Lemma \ref{lem:xizc}, \eqref{eq:bridges} and the definition of $\Phi$ were used in the second inequality. The claimed inequality \eqref{eq:epslambda} follows. 
\end{proof}

\begin{proof}[Proof of \cref{thm:quant}]
Applying \eqref{eq:epslambda} and \eqref{eq:abound} yields the estimate
\[
B((1-\eps)z_c)  = \sum_{n\geq 0} a((1-\eps)z_c;n) \leq \sum_{n\geq 0} e^{\psi(\eps)\log(1-\eps)n} = \left[1-(1-\eps)^{\psi(\eps)}\right]^{-1} 
\]
We deduce from Proposition \ref{prop:M&S} that
\[\chi((1-\eps)z_c) \leq \frac{1}{(1-\eps)z_c}\exp\left[ 2\left[1-(1-\eps)^{\psi(\eps)}\right]^{-1}-2\right].\]
The claim now follows by applying the trivial inequality \eqref{eq:trivial2} as in the proof of Proposition \ref{prop:weakHW}.
\end{proof}

\begin{proof}[Proof of \cref{thm:main}]
Let $\phi$ be as in \cref{thm:DCH}.  It suffices to prove that $\Psi(n)=o(n^{1/2})$. Since $\phi(\eps)$ is increasing and $\phi(\eps)>0$ for all $0<\eps\leq 1$, we have that $\Phi(\eps)>0$ for all $0<\eps\leq 1$, and hence that $\psi(\eps)\to\infty$ as $\eps\to 0$. Fix $M \geq 1$ and $\delta>0$. Then there exists $0<\eps_0\leq 1$ such that $\psi(\eps)\geq M$ for every $0<\eps \leq \eps_0$.
 Setting $\eps=\delta n^{-1/2}$ yields that
\[
\Psi(n) \leq 2
\left[1-(1-\delta n^{-1/2} )^{M}\right]^{-1} 
- (n+1) \log(1-\delta n^{-1/2}) 
\]
for every sufficiently large $n$. It follows by calculus that
\[
\Psi(n) \leq \left(\delta + \frac{2}{\delta M}\right) n^{1/2} + o(n^{1/2})
\]
as $n\to \infty$. The result follows since $M\geq 1$ and $\delta>0$ were arbitrary (e.g.\ by taking $M=\delta^{-2}$ and sending $\delta\to0$).
\end{proof}

\begin{proof}[Proof of Corollary \ref{cor:poly}]
Suppose that $\phi(\eps)\geq C\eps^{\nu}$ for some $C>0$ and $\nu>1$, and that $\phi$ satisfies \eqref{eq:phidef}. Then $\Phi(\eps)\geq C\eps^{\nu-1}$ and hence
\[
\psi(\eps) \geq \sup\left\{\lambda \geq 1 : \eps \leq 1- \exp\left[-C \frac{\lambda^{1-\nu}}{\lambda-1} \right]\right\}.
\]
A straightforward analysis then yields that 
\[
\psi(\eps) \geq C' \eps^{-1/\nu}
\]
for some $C'>0$ and every $\eps>0$ sufficiently small. Let $\alpha>0$. Then
\begin{align*}
\Psi(n) \leq 2\left[1-(1-n^{-\alpha})^{C' n^{\alpha/\nu}}\right]^{-1} - (n+1)\log(1-n^{-\alpha})
\end{align*}
for every $n$ sufficiently large. It follows by calculus that
\[
\Psi(n) = O\left( n^{\alpha(\nu-1)/\nu} + n^{1-\alpha}\right)
\]
as $n\to\infty$. Optimizing by taking $\alpha = \nu/(2\nu-1)$ and applying \cref{thm:quant} yields the claim.
\end{proof}


\begin{remark}
\label{remark}
We now explain why values of $\nu\leq 1$ are not possible in Corollary \ref{cor:poly}. Suppose that $\phi$ satisfies \eqref{eq:phidef}. An inequality of Madras and Slade \cite[eq.\ 3.1.14]{MR2986656} (which is a trivial consequence of Proposition \ref{prop:M&S}) states that
\[
B((1-\eps)z_c) \geq 1+\frac{1}{2}\log[(1-\eps)z_c]+ \frac{1}{2}\log \eps^{-1},
\]
and it follows that
\[
\left[1-(1-\eps)^{\psi(\eps)}\right]^{-1} \geq 1+\frac{1}{2}\log[(1-\eps)z_c]+ \frac{1}{2}\log \eps^{-1}.
\]
This rearranges to give
\[
\psi(\eps) \leq \frac{2+o(1)}{-\eps \log \eps}
\quad \text{ and hence } \quad
\phi\left(\frac{-\eps \log \eps}{2+o(1)}\right) \leq  (1+o(1))\eps 
\]
 as $\eps\to 0$. Probabilistically, this yields a lower bound on the asymptotic probability that an $n$-step SAB has height at least $\eps n$. 
\end{remark}

\subsection*{Acknowledgments} The author was supported by internships at Microsoft Research and a Microsoft Research PhD Fellowship. We thank Omer Angel, Hugo Duminil-Copin, Tyler Helmuth and Gordon Slade for comments on an earlier draft. Finally, we thank the anonymous referee for catching several errors in the preprint. 

\footnotesize{
  \bibliographystyle{abbrv}
  \bibliography{unimodularthesis}
  }

\end{document}